\definecolor{uuuuuu}{rgb}{0.26666666666666666,0.26666666666666666,0.26666666666666666}
\definecolor{xdxdff}{rgb}{0.49019607843137253,0.49019607843137253,1.}
\definecolor{ffqqqq}{rgb}{1.,0.,0.}
\newtheorem{theorem}{Theorem}[section]
\newtheorem{lemma}[subsection]{Lemma}
\theoremstyle{definition}
\newtheorem{remark}[subsection]{Remark}
\newtheorem{definition}[subsection]{Definition}
\newtheorem{corollary}[theorem]{Corollary}
\theoremstyle{remark}
\numberwithin{equation}{section}
\title[Optional running title]{Actual title}
\begin{document}
\title[Box Dimension of Mixed Katugampola Fractional Integral]{Box Dimension of Mixed Katugampola Fractional Integral of two-dimensional continuous functions }

%
\author{Subhash Chandra, Syed Abbas}
\address{School of Basic Sciences, Indian Institute of Technology Mandi\\ Kamand (H.P.) - 175005, India}
\email{sahusubhash77@gmail.com; sabbas.iitk@gmail.com (Email of corresponding author)}






\subjclass[2010]{26A33, 28A80, 31A10.}
\keywords{Box dimension, mixed Katugampola fractional integral, mixed Hadamard fractional integral, variation}
\begin{abstract}
The goal of this article is to study the box dimension of the mixed Katugampola fractional integral of two-dimensional continuous functions on $[0,1]\times [0,1]$. We prove that the box dimension of the mixed Katugampola fractional integral having fractional order $(\alpha=(\alpha_1,\alpha_2);~ \alpha_1>0,\alpha_2>0)$ of two-dimensional continuous functions on $[0,1]\times [0,1]$ is still two. Moreover, the results are also established for the mixed Hadamard fractional integral. 
\end{abstract}

\maketitle



\section{\textbf{Introduction}} Fractional calculus (FC) and fractal geometry (FG) have become rapidly growing fields in theory as well as applications. Since random fractals are a better example of a highly irregular functions, fractional calculus is the best mathematical operator for analyzing such a functions. Tatom gave a general relation between fractional calculus and fractal functions in \cite{FB}. FG is more prominent than classical geometry to study irregular sets. Theory on FG is given in reference \cite{MF}.  For various notions and definitions of fractional integrals and derivatives reader may refer to \cite{KIL,Samko}.\\ The box dimension plays an important role to study the smoothness of any irregular function. A connection between FC and fractal dimensions can be found in \cite{KIM,L2,Liang11,L3,WU2}. Liang \cite{L1} proved that the box dimension of a function which is of bounded variation and continuous on $[0,1]$ is $1$ and also box dimension of its Riemann-Livouville fractional  $I^{\nu} g$ on $[0,1]$ is $1$, where $$I^\nu g(x)= \frac{1}{\Gamma(\nu)}\int_{0}^x(x-s)^{\nu-1}g(s)ds.$$ Liang \cite{Liang11} investigated the fractal dimension of the fractional integral of Riemann-Liouville type of continuous function having box dimension $1$. We try to establish more general results for the  fractional integral of mixed Katugampola type. Box dimension of a  bivariate function which is of bounded variation in  Arzel\'{a} sense and  continuous on $[a,b]\times[c,d]$ has been investigated in \cite{V1}. It has been shown that box dimension of a bivariate function which is of bounded variation in Arzel\'{a} sense and continuous on  $[a,b]\times[c,d]$ is $2$ . Also some examples of $2$-dimensional functions are given which are not of bounded variation. Additionally, they have proved that the box dimension of the fractional integral of mixed Riemann-Liouville type of a continuous function which is of bounded variation in Arzel\'{a} sense  is $2.$ Analogous results can be seen for the mixed Katugampola fractional integral $ (\mathfrak{I}^{\alpha}f)$ in \cite{V2}, where
 $$ (\mathfrak{I}^{\alpha}f)(x,y)=\frac{(\rho_1+1)^{1-\alpha_1}(\rho_2+1)^{1-\alpha_2}}{\Gamma (\alpha_1)  \Gamma (\alpha_2)} \int_a ^x \int_c ^y (x^{\rho_1+1}-s^{\rho_1+1})^{\alpha_1-1} (y^{\rho_2+1}-t^{\rho_2+1})^{\alpha_2-1}s^{\rho_1}t^{\rho_2}f(s,t)dsdt. $$
The fractional integral of mixed Katugampola type is the unification of the fractional integral of mixed Riemann-Liouville type and fractional integral of mixed Hadamard type. Above results are based on analytical aspects in the sense that they have used bounded variation property in Arzel\'{a} sense to establish their results. In this work we do not require this condition. We have seen that bounded variation property plays an important role in case of bivariate functions. We can reinforce that deduction from bivarite to univariate case is easy but converse is not easy because for dealing with bivariate case we need more notions of bounded variation such as Arzel\'{a}, Peirpont and Hahn, for more details, we refer to \cite{CK3}. Sometimes it is difficult to maintain continuity in bivariate case.
 From above discussion, a natural question arises that what would be the box dimension of the mixed Katugampola fractional integral of a $2$-dimensional continuous function.\\
 In this article, we will prove that the box dimension of the mixed Katugampola fractional integral of $2$-dimensional continuous functions is also $2$. Furthermore, we will give the similar  result for the mixed Hadamard fractional integral.
\section{\textbf{Preliminaries}}
In this section, we give definition of fractional integrals and required terminologies related to this report.

\textit{2.1. Mixed Katugampola Fractional Integral:}
\begin{definition}
 Let $f$ be a function which is defined on a closed rectangle $[a,b] \times [c,d]$ and $a\geq 0,c\geq 0.$ Assuming that the following integral exists, mixed Katugampola fractional integral of $f$ is defined by $$ (\mathfrak{I}^{\alpha}f)(x,y)=\frac{(\rho_1+1)^{1-\alpha_1}(\rho_2+1)^{1-\alpha_2}}{\Gamma (\alpha_1)  \Gamma (\alpha_2)} \int_a ^x \int_c ^y (x^{\rho_1+1}-s^{\rho_1+1})^{\alpha_1-1} (y^{\rho_2+1}-t^{\rho_2+1})^{\alpha_2-1}s^{\rho_1}t^{\rho_2}f(s,t)dsdt ,$$ where $\alpha = ( \alpha_1, \alpha_2 )$ with $ \alpha_1 >0 , \alpha_2 >0$ and $(\rho_1,\rho_2)\neq(-1,-1).$
\end{definition}
\textit{2.2. Mixed Hadamard Fractional Integral:}\\\\
By using L'hospital rule and $\rho_1,\rho_2 \to -1^+$, mixed Katugampola reduces to the mixed Hadamard fractional integral.
\begin{definition}\label{Def2}
 Let $f$ be a function which is defined on a closed rectangle $[a,b] \times [c,d]$ and $a\geq 0,c\geq 0.$ Assuming that the following integral exists, mixed Hadamard fractional integral of $f$ is defined by $$ (\mathfrak{I}^{\gamma}f)(x,y)=\frac{1}{\Gamma (\gamma_1)  \Gamma (\gamma_2)} \int_a ^x \int_c ^y (\log\frac{x}{u})^{\gamma_1-1} (\log\frac{y}{v})^{\gamma_2-1}\frac{f(u,v)}{uv} dudv ,$$ where $\gamma = ( \gamma_1, \gamma_2 )$ with $ \gamma_1 >0 , \gamma_2 >0.$
\end{definition}
Reader may recommended to \cite{V2} for the above definitions of fractional integrals.\\
\textit{2.3. Range of $f$:}
\begin{definition}
Let $A=[a,b]\times [c,d]$ be the rectangle. For a function $f:A\to \mathbb{R}$, the maximum range of $f$ over $A$ is given by $$ R_f[A]:=\sup_{(t,u),(x,y)\in A}\lvert f(t,u)-(x,y)\rvert.$$
\end{definition}
\textit{2.4. Box Dimension:}\\\begin{definition}
Let $S\neq \emptyset$ be a bounded subset of $\mathbb{R}^n$. Let the smallest number of sets having diameter at most $\delta$ is denoted by $N_{\delta}(S)$ which can cover $S.$ Then 
\begin{equation}
\underline{\dim}_B(S)=\mathop{\underline{\lim}}_{\delta \to 0}\frac{\log N_\delta(S)}{-\log\delta}~~~~~\text{(Lower box dimension)}
\end{equation}
and 
\begin{equation}
\overline{\dim}_B(S)=\overline{\lim_{\delta \to 0}}\frac{\log N_\delta(S)}{-\log\delta}~~~~~\text{(Upper box dimension)}.
\end{equation}
If $\underline{\dim}_B(S)=\overline{\dim}_B(S),$ common value is called  the box dimension of $S.$ That is,
\begin{equation*}
\dim_B(S)=\lim_{\delta \to 0}\frac{\log N_\delta(S)}{-\log\delta}.
\end{equation*}
\end{definition}
Let $C(I\times J)$ be the set of continuous functions, where $I\times J=[0,1]\times [0,1]$. Let $C$ be an absolutely positive constant and even in the same line at different occurrences it may have different values. Sometimes we use the term fractional integral of mixed Katugampola type in place of mixed Katugampola fractional integral.
\section{\textbf{Main Results}}
In this section, first we give following lemmas which act as prelude to our main theorem.
\begin{lemma}\label{lmm}
Let $f:[0,1]\times [0,1] \to \mathbb{R}$ is continuous and $0<\delta<1,~ \frac{1}{\delta} <m,n<1+\frac{1}{\delta}$ for some $m,n \in \mathbb{N}.$ If the number of $\delta$-cubes is denoted by  $N_{\delta}(Gr(f))$  which intersect the graph $Gr(f)$ of the function $f$, then
\begin{equation}
\sum_{j=1}^n \sum_{i=1}^m \max \left \{\frac{R_f[A_{ij}]}{\delta},1 \right \} \leq N_\delta(Gr(f))\leq 2mn+\frac{1}{\delta} \sum_{j=1}^n \sum_{i=1}^m R_f[A_{ij}],
\end{equation}
where $A_{ij}$ is the $(i,j)$-th cell corresponding to the net under consideration.
\end{lemma}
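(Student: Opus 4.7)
The plan is to exploit the product structure of the $\delta$-grid on $[0,1]\times[0,1]\times\mathbb{R}$: the square is partitioned into $mn$ cells $A_{ij}$ of side $\delta$ (the hypothesis $1/\delta<m,n<1+1/\delta$ makes this work up to a permissible overhang past $1$), and above each cell sits a column of disjoint $\delta$-cubes in $\mathbb{R}^3$. The counting problem then localises to each cell: how many cubes in the column above $A_{ij}$ meet $Gr(f)$? Because $f$ restricted to $A_{ij}$ has oscillation exactly $R_f[A_{ij}]$, this is a one-dimensional question on the vertical axis for a segment of length $R_f[A_{ij}]$, and the two inequalities come from summing the obvious upper and lower one-dimensional bounds over all $i,j$.

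For the upper bound, a vertical segment of length $R_f[A_{ij}]$ meets at most $\lceil R_f[A_{ij}]/\delta\rceil+1\leq R_f[A_{ij}]/\delta+2$ consecutive grid intervals of length $\delta$. Summing this count over $i,j$, and observing that every $\delta$-cube meeting $Gr(f)$ lies above some $A_{ij}$, yields
\begin{equation*}
N_\delta(Gr(f))\leq\sum_{j=1}^n\sum_{i=1}^m\Bigl(\frac{R_f[A_{ij}]}{\delta}+2\Bigr)=2mn+\frac{1}{\delta}\sum_{j=1}^n\sum_{i=1}^m R_f[A_{ij}].
\end{equation*}

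For the lower bound the continuity of $f$ is essential: $f(A_{ij})$ is the continuous image of a connected compact set, hence a closed interval $[\min_{A_{ij}}f,\max_{A_{ij}}f]$ of length exactly $R_f[A_{ij}]$. Consequently the vertical projection of $Gr(f)\cap(A_{ij}\times\mathbb{R})$ is a genuine segment of that length, which meets at least $\lceil R_f[A_{ij}]/\delta\rceil\geq R_f[A_{ij}]/\delta$ of the vertical $\delta$-intervals; since the graph is nonempty it also meets at least one cube in that column. Hence at least $\max\{R_f[A_{ij}]/\delta,1\}$ of the grid cubes above $A_{ij}$ intersect $Gr(f)$, and because the columns above distinct cells are disjoint, the per-cell counts may be summed without overcounting to give the desired lower inequality.

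The step that demands the most care is really just bookkeeping: one has to track that the two ceiling estimates $\lceil x\rceil\geq x$ and $\lceil x\rceil\leq x+1$ are applied with the correct sign at each stage, that the nonemptiness correction $\max\{\cdot,1\}$ enters only in the lower bound (the additive $+2$ per cell absorbs it automatically on the upper side), and that the overhang past $1$ allowed by $m,n<1+1/\delta$ does not spoil the partition. There is no deep inequality to prove here; the real content of the lemma is simply that the local oscillation $R_f[A_{ij}]$ controls the number of $\delta$-cubes above $A_{ij}$ meeting $Gr(f)$, from above and below, up to an additive $O(1)$ per cell.
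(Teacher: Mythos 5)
Your proposal is correct and follows essentially the same argument as the paper: per-cell counting of the $\delta$-cubes in the column above each $A_{ij}$, with the lower bound $\max\{R_f[A_{ij}]/\delta,1\}$ and the upper bound $R_f[A_{ij}]/\delta+2$, summed over all cells. In fact you supply more detail than the paper's two-line proof, in particular the use of continuity and connectedness to see that $f(A_{ij})$ is an interval of length $R_f[A_{ij}]$, which is exactly what justifies the lower per-cell count.
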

\begin{proof}
If $f(x,y)$ is continuous on $I\times J$, the number of cubes having side $\delta$ in the part above $A_{ij}$ which intersect $Gr(f,I\times J)$ is atleast $$\max \left \{\frac{R_f[A_{ij}]}{\delta},1 \right \}$$
and at most $$2+\frac{R_f[A_{ij}]}{\delta}.$$
By summing over all such parts we get the required result.
\end{proof}
\begin{lemma}
Let $f(x,y)\in C(I\times J)$ and $0<\alpha_1<1,~0<\alpha_2<1.$ If $h_1>0,h_2>0$ and $x+h_1\le 1,y+h_2\le 1,$ then
\begin{dmath*}
(\mathfrak{I}^{\alpha}f)(x+h_1,y+h_2)-(\mathfrak{I}^{\alpha}f)(x,y)=\\\frac{(\rho_1+1)^{-\alpha_1}(\rho_2+1)^{-\alpha_2}}{\Gamma (\alpha_1)  \Gamma (\alpha_2)} \int_0^1 \int_0^1 (1-s)^{\alpha_1-1}(1-t)^{\alpha_2-1}.\\\left[\left((x+h_1)^{\rho_1+1}\right)^{\alpha_1}\left((y+h_2)^{\rho_2+1}\right)^{\alpha_2}f\left((x+h_1)s^{\frac{1}{\rho_1+1}},(y+h_2)t^{\frac{1}{\rho_2+1}}\right)\\-\left(x^{\rho_1+1}\right)^{\alpha_1}\left(y^{\rho_2+1}\right)^{\alpha_2}f\left(xs^{\frac{1}{\rho_1+1}},yt^{\frac{1}{\rho_2+1}}\right)\right]dsdt.
\end{dmath*}
\end{lemma}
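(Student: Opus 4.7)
The plan is to rewrite each of $(\mathfrak{I}^{\alpha}f)(x+h_1,y+h_2)$ and $(\mathfrak{I}^{\alpha}f)(x,y)$ by a single change of variables that transports the variable region of integration $[0,x]\times[0,y]$ (respectively $[0,x+h_1]\times[0,y+h_2]$) onto the fixed unit square $[0,1]\times[0,1]$. Once both integrands live on the same domain, forming the difference is immediate by linearity of the integral and the claimed formula drops out.

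Concretely, starting from the definition with $a=c=0$, I would apply the substitution $u=(s/x)^{\rho_1+1}$, $v=(t/y)^{\rho_2+1}$ in the integral for $(\mathfrak{I}^{\alpha}f)(x,y)$ and the analogous substitution with $x+h_1,y+h_2$ in the other term. The Jacobian computation gives
\begin{equation*}
s^{\rho_1}\,ds=\frac{x^{\rho_1+1}}{\rho_1+1}\,du,\qquad t^{\rho_2}\,dt=\frac{y^{\rho_2+1}}{\rho_2+1}\,dv,
\end{equation*}
and the kernel factors as $(x^{\rho_1+1}-s^{\rho_1+1})^{\alpha_1-1}=(x^{\rho_1+1})^{\alpha_1-1}(1-u)^{\alpha_1-1}$ with the analogous identity in $t,v$. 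Putting the pieces together, the prefactor $(\rho_1+1)^{1-\alpha_1}$ combines with the Jacobian factor $\frac{1}{\rho_1+1}$ to produce $(\rho_1+1)^{-\alpha_1}$, while $(x^{\rho_1+1})^{\alpha_1-1}$ merges with $x^{\rho_1+1}$ to produce $(x^{\rho_1+1})^{\alpha_1}$. After renaming $u,v$ back to $s,t$, this yields the single-domain representation
\begin{equation*}
(\mathfrak{I}^{\alpha}f)(x,y)=\frac{(\rho_1+1)^{-\alpha_1}(\rho_2+1)^{-\alpha_2}}{\Gamma(\alpha_1)\Gamma(\alpha_2)}\int_0^1\!\!\int_0^1(1-s)^{\alpha_1-1}(1-t)^{\alpha_2-1}(x^{\rho_1+1})^{\alpha_1}(y^{\rho_2+1})^{\alpha_2}f\bigl(xs^{1/(\rho_1+1)},yt^{1/(\rho_2+1)}\bigr)\,ds\,dt,
\end{equation*}
and repeating the calculation with $(x+h_1,y+h_2)$ in place of $(x,y)$ gives the corresponding expression for $(\mathfrak{I}^{\alpha}f)(x+h_1,y+h_2)$. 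Subtracting and pulling the difference under a common double integral over $[0,1]\times[0,1]$ produces exactly the identity stated.

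There is no deep obstacle here; the task is essentially bookkeeping of the exponents, and the only point requiring slight care is verifying that the powers of $x$ (and of $\rho_1+1$) coming from the kernel, the outer prefactor, and the Jacobian consolidate precisely to $(x^{\rho_1+1})^{\alpha_1}$ and $(\rho_1+1)^{-\alpha_1}$, with the analogous consolidation in $y$. The substitution is a diffeomorphism on $(0,x)$ since $\rho_1+1\neq 0$ (the boundary point $s=0$ has measure zero and causes no issue), and continuity of $f$ on the compact rectangle $[0,1]\times[0,1]$ together with the hypothesis $0<\alpha_1,\alpha_2<1$ ensures integrability of the transformed integrand, so Fubini and the change of variables are both justified.
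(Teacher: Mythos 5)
Your proposal is correct and is essentially the paper's own argument: the paper forms the difference of the two defining integrals and then applies exactly your substitution $u=(s/(x+h_1))^{\rho_1+1}$, $v=(t/(y+h_2))^{\rho_2+1}$ (and the analogue with $x,y$) to map each region onto $[0,1]\times[0,1]$, with the same consolidation of the kernel, Jacobian, and prefactor exponents. The only difference is ordering (you transform each term first and subtract afterwards), which is immaterial.
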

\begin{proof} By considering the conditions of the lemma,
\begin{dmath}\label{2.2}
(\mathfrak{I}^{\alpha}f)(x+h_1,y+h_2)-(\mathfrak{I}^{\alpha}f)(x,y)=\\\frac{(\rho_1+1)^{1-\alpha_1}(\rho_2+1)^{1-\alpha_2}}{\Gamma (\alpha_1)  \Gamma (\alpha_2)} \int_0 ^{x+h_1} \int_0 ^{y+h_2} ((x+h_1)^{\rho_1+1}-s^{\rho_1+1})^{\alpha_1-1} ((y+h_2)^{\rho_2+1}-t^{\rho_2+1})^{\alpha_2-1}.\\s^{\rho_1}t^{\rho_2}f(s,t)dsdt\\ -\frac{(\rho_1+1)^{1-\alpha_1}(\rho_2+1)^{1-\alpha_2}}{\Gamma (\alpha_1)  \Gamma (\alpha_2)} \int_0 ^x \int_0 ^y (x^{\rho_1+1}-s^{\rho_1+1})^{\alpha_1-1} (y^{\rho_2+1}-t^{\rho_2+1})^{\alpha_2-1}s^{\rho_1}t^{\rho_2}f(s,t)dsdt 
\end{dmath}
By applying the integral transform, let $$\left(\frac{s}{x+h_1}\right)^{\rho_1+1}=u,$$ and  $$\left(\frac{t}{y+h_2}\right)^{\rho_2+1}=v.$$ Then $$dsdt =|J|dudv,$$ where
\[ 
J=
\begin{bmatrix}
\frac{\partial s}{\partial u} & \frac{\partial s}{\partial v}\\
\frac{\partial t}{\partial u} & \frac{\partial t}{\partial v}
\end{bmatrix}
\]
$$=\frac{(x+h_1)^{\rho_1+1}(y+h_2)^{\rho_2+1}}{(\rho_1+1)(\rho_2+1)s^{\rho_1}t^{\rho_2}}.$$
Thus, we have
\begin{dmath*}
\frac{(\rho_1+1)^{1-\alpha_1}(\rho_2+1)^{1-\alpha_2}}{\Gamma (\alpha_1)  \Gamma (\alpha_2)} \int_0 ^{x+h_1} \int_0 ^{y+h_2} ((x+h_1)^{\rho_1+1}-s^{\rho_1+1})^{\alpha_1-1} ((y+h_2)^{\rho_2+1}-t^{\rho_2+1})^{\alpha_2-1}.\\s^{\rho_1}t^{\rho_2}f(s,t)dsdt\\=\frac{(\rho_1+1)^{-\alpha_1}(\rho_2+1)^{-\alpha_2}}{\Gamma (\alpha_1)  \Gamma (\alpha_2)} \int_0^1 \int_0^1 (1-s)^{\alpha_1-1}(1-t)^{\alpha_2-1} \left((x+h_1)^{\rho_1+1}\right)^{\alpha_1}\left((y+h_2)^{\rho_2+1}\right)^{\alpha_2}.\\f\left((x+h_1)s^{\frac{1}{\rho_1+1}},(y+h_2)t^{\frac{1}{\rho_2+1}}\right)dsdt.
\end{dmath*}
Similarly \begin{dmath*}
\frac{(\rho_1+1)^{1-\alpha_1}(\rho_2+1)^{1-\alpha_2}}{\Gamma (\alpha_1)  \Gamma (\alpha_2)} \int_0 ^x \int_0 ^y (x^{\rho_1+1}-s^{\rho_1+1})^{\alpha_1-1} (y^{\rho_2+1}-t^{\rho_2+1})^{\alpha_2-1}s^{\rho_1}t^{\rho_2}f(s,t)dsdt=\\\frac{(\rho_1+1)^{-\alpha_1}(\rho_2+1)^{-\alpha_2}}{\Gamma (\alpha_1)  \Gamma (\alpha_2)} \int_0^1 \int_0^1 (1-s)^{\alpha_1-1}(1-t)^{\alpha_2-1} \left(x^{\rho_1+1}\right)^{\alpha_1}\left(y^{\rho_2+1}\right)^{\alpha_2} f\left(xs^{\frac{1}{\rho_1+1}},yt^{\frac{1}{\rho_2+1}}\right)dsdt.
\end{dmath*}
Consequently, we get the desired result by using these two values in \ref{2.2}.
\end{proof}
Now, we will establish our main result.
\begin{theorem}\label{Thmain}
Let a non-negative function $f(x,y)\in C(I\times J)$ and $0<\alpha_1<1,~0<\alpha_2<1, \rho_1<-1,~\rho_2<-1.$\\ If
 \begin{equation}\label{11}
\dim_B Gr(f, I\times J)=2,
\end{equation}
then, the box dimension of the mixed Katugampola fractional integral of $f(x,y)$ of order $\alpha=(\alpha_1,\alpha_2)$ exists and is equal to $2$ on $I\times J$, as
\begin{equation}\label{eq12}
\dim_B Gr(\mathfrak{I}^{\alpha}f,I\times J)=2.
\end{equation}
\end{theorem}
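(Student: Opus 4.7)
The lower bound $\dim_B Gr(\mathfrak{I}^\alpha f, I\times J)\ge 2$ is immediate: the function $g:=\mathfrak{I}^\alpha f$ is continuous on the compact square $I\times J$, and its graph projects onto $I\times J$ by a Lipschitz map, which cannot lower box dimension. So the entire content of the theorem is the upper bound $\overline{\dim}_B Gr(g)\le 2$.

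The plan is to apply Lemma 3.1 to $g$ with a $\delta$-mesh and $m,n\asymp 1/\delta$. The first term $2mn$ already yields $O(\delta^{-2})$, so it suffices to prove the oscillation estimate
\[\sum_{j=1}^n\sum_{i=1}^m R_g[A_{ij}]\le \frac{C}{\delta}.\]
I would deduce this from a pointwise Lipschitz-type bound $|g(x+h_1,y+h_2)-g(x,y)|\le C(h_1+h_2)$, which gives $R_g[A_{ij}]\le C\delta$ and hence $\sum R_g\le C\cdot mn\cdot\delta = O(\delta^{-1})$. Plugging back into Lemma 3.1, $N_\delta(Gr(g))\le C\delta^{-2}$, whence $\overline{\dim}_B Gr(g)\le 2$.

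The Lipschitz estimate is extracted from the increment formula of the preceding lemma. Writing $\psi(u,v):=u^{\alpha_1(\rho_1+1)}v^{\alpha_2(\rho_2+1)}$ and $F(u,v;s,t):=f\bigl(u\, s^{1/(\rho_1+1)}, v\, t^{1/(\rho_2+1)}\bigr)$, I would split the integrand bracket as
\[\psi(x+h_1,y+h_2)\bigl[F(x+h_1,y+h_2;s,t)-F(x,y;s,t)\bigr]+\bigl[\psi(x+h_1,y+h_2)-\psi(x,y)\bigr]F(x,y;s,t).\]
The first piece is handled using the boundedness and uniform continuity of $f$ on the compact square $I\times J$ together with $|(x+h_1)s^{1/(\rho_1+1)}-x s^{1/(\rho_1+1)}|\le h_1$; the second uses the smoothness of the weight $\psi$ and $\|f\|_\infty<\infty$. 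Integration against the Beta kernel $(1-s)^{\alpha_1-1}(1-t)^{\alpha_2-1}$ on $[0,1]^2$ is finite since $\alpha_i>0$, and after these bounds the overall estimate becomes linear in $h_1+h_2$.

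The main obstacle is the behaviour of the weight $\psi$ near the axes $\{x=0\}\cup\{y=0\}$, where it fails to be Lipschitz: depending on the sign of $\rho_i+1$ it either vanishes with a fractional exponent or blows up, so the naive bound $|\psi(x+h_1,y+h_2)-\psi(x,y)|\le C(h_1+h_2)$ can break down there. I would isolate the $O(1/\delta)$ axial cells, estimate their contribution to $\sum R_g$ directly from the explicit integral representation, and show it is still $O(1/\delta)$; interior cells get the full Lipschitz bound from the smoothness of $\psi$. Combining both contributions delivers $\sum R_g\le C/\delta$ and, via Lemma 3.1, the desired bound $\overline{\dim}_B Gr(g)\le 2$.
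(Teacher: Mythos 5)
Your lower-bound remark is fine, but the core of your upper bound rests on a claim that is false for general continuous $f$: the Lipschitz-type estimate $|(\mathfrak{I}^\alpha f)(x+h_1,y+h_2)-(\mathfrak{I}^\alpha f)(x,y)|\le C(h_1+h_2)$. Uniform continuity of $f$ only gives a modulus of continuity $\omega_f$, not a bound linear in the increment, so the ``first piece'' in your splitting can only be estimated by $C\,\omega_f(h_1+h_2)$; and the kernel itself destroys Lipschitz behaviour even for the nicest $f$ — already in the one-dimensional Riemann--Liouville model $f\equiv 1$ gives $I^\alpha 1(x)=x^{\alpha}/\Gamma(\alpha+1)$, which is merely H\"older of order $\alpha<1$ near $0$. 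Thus the defect is not confined to the axial cells where the weight $\psi$ misbehaves (the only place you propose a repair): on every cell the best cell-oscillation bound obtainable from boundedness plus uniform continuity is of the form $R_{\mathfrak{I}^\alpha f}[A_{ij}]\le C\bigl(\omega_f(C\delta)+\delta^{\min(\alpha_1,\alpha_2)}\bigr)$, which after Lemma 3.1 yields only $N_\delta\lesssim \delta^{-3}\omega_f(\delta)+\delta^{-2-1+\min(\alpha_1,\alpha_2)}$, i.e.\ $\overline{\dim}_B\le 3-\min(\alpha_1,\alpha_2)$, not $2$.

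A structural symptom of the same problem: your argument never uses the hypothesis $\dim_B Gr(f,I\times J)=2$ beyond continuity, so if it were correct it would prove that $\mathfrak{I}^\alpha f$ is Lipschitz (hence of graph dimension $2$) for every continuous $f$, which is not true. The paper's proof avoids any pointwise modulus estimate: it bounds the oscillation of $\mathfrak{I}^\alpha f$ on the cell $A_{ij}$ by the factor $\frac{1}{(i+1)(j+1)}$ times a sum of the oscillations $R_f$ of $f$ itself over the $\delta$-cells covering the ``history'' rectangle $[0,(i+1)\delta]\times[0,(j+1)\delta]$, then sums over $(i,j)$, using $\sum_{i,j}\frac{1}{(i+1)(j+1)}\le C(\log m)(\log n)$, to obtain $N_\delta(Gr(\mathfrak{I}^\alpha f))\le C(\log m)(\log n)\,N_\delta(Gr(f))$. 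The logarithmic factors disappear in $\log N_\delta/(-\log\delta)$, and it is precisely here that hypothesis \eqref{11} (the counting exponent of $Gr(f)$ being $2$) is used to conclude $\overline{\dim}_B Gr(\mathfrak{I}^\alpha f,I\times J)\le 2$. To fix your proof you would need to replace the pointwise Lipschitz bound by such a cell-by-cell transfer of oscillation from $\mathfrak{I}^\alpha f$ to $f$, which is essentially the paper's argument.
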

\begin{proof}
Since $f(x,y)\in C(I\times J)$, $(\mathfrak{I}^{\alpha}f)(x,y)$ is also continuous on $I\times J$ (from Theorem 3.4 in \cite{V2}). From the definition of the box dimension, we can get 
\begin{equation}\label{12}
\underline{\dim}_B Gr(\mathfrak{I}^{\alpha}f,I\times J) \geq 2.
\end{equation}
To prove Equation \ref{eq12}, we have to show \ref{13}
\begin{equation}\label{13}
\overline{\dim}_B Gr(\mathfrak{I}^{\alpha}f,I\times J) \leq 2.
\end{equation}
Suppose that $0<\delta<\frac{1}{2}$, $ \frac{1}{\delta} <m,n<1+\frac{1}{\delta}$ and  $N_{\delta}(Gr(f))$ is the number of $\delta$-cubes that intersect $Gr(f)$. From Equation \ref{11}, it holds
\begin{equation*}
\lim_{\delta \to 0} \frac{\log N_{\delta}(Gr(f))}{-\log\delta}=2.
\end{equation*}
Let $N_{\delta}(Gr(\mathfrak{I}^\alpha f))$ is the number of $\delta$-cubes that intersect $Gr(\mathfrak{I}^\alpha f)$. Thus Inequality \ref{13} can be written as
\begin{equation}\label{14}
\overline{\lim_{\delta \to 0} } \frac{\log N_{\delta}(Gr(\mathfrak{I}^\alpha f))}{-\log\delta} \leq 2.
\end{equation}
Now, we have to prove Inequality \ref{14}.\\ 
For $0<\delta<\frac{1}{2}$, $ \frac{1}{\delta} <m,n<1+\frac{1}{\delta},$   let non-negative integers $i$ and $j$ such that $0\le i \le m$,  $0\le j \le n$. Then
\begin{dmath*}
\left| \frac{(\rho_1+1)^{-\alpha_1})(\rho_2+1)^{-\alpha_2}}{\Gamma (\alpha_1)\Gamma (\alpha_2)} \right| R_{\mathfrak{I}^\alpha f}[A_{ij}]=\sup _{(x+h_1,y+h_2),(x,y)\in A_{ij}} \lvert (\mathfrak{I}^\alpha f)(x+h_1,y+h_2)-(\mathfrak{I}^\alpha f)(x,y)\rvert,
\end{dmath*}
where $A_{ij}=[i\delta,(i+1)\delta]\times [j\delta,(j+1)\delta].$\\
Here, 
\begin{dmath*}
=\lvert (\mathfrak{I}^\alpha f)(x+h_1,y+h_2)-(\mathfrak{I}^\alpha f)(x,y)\rvert=\left| \int_0^1 \int_0^1 (1-s)^{\alpha_1-1}(1-t)^{\alpha_2-1} \left((x+h_1)^{\rho_1+1}\right)^{\alpha_1}\left((y+h_2)^{\rho_2+1}\right)^{\alpha_2}.\\f\left((x+h_1)s^{\frac{1}{\rho_1+1}},(y+h_2)t^{\frac{1}{\rho_2+1}}\right)dsdt\\-\int_0^1 \int_0^1 (1-s)^{\alpha_1-1}(1-t)^{\alpha_2-1}\left((x+h_1)^{\rho_1+1}\right)^{\alpha_1}\left((y+h_2)^{\rho_2+1}\right)^{\alpha_2}f\left(xs^{\frac{1}{\rho_1+1}},yt^{\frac{1}{\rho_2+1}}\right)dsdt\\ +\int_0^1 \int_0^1 (1-s)^{\alpha_1-1}(1-t)^{\alpha_2-1} \left((x+h_1)^{\rho_1+1}\right)^{\alpha_1}\left((y+h_2)^{\rho_2+1}\right)^{\alpha_2}f\left(xs^{\frac{1}{\rho_1+1}},yt^{\frac{1}{\rho_2+1}}\right)dsdt\\
- \int_0^1 \int_0^1 (1-s)^{\alpha_1-1}(1-t)^{\alpha_2-1} \left(x^{\rho_1+1}\right)^{\alpha_1}\left(y^{\rho_2+1}\right)^{\alpha_2} f\left(xs^{\frac{1}{\rho_1+1}},yt^{\frac{1}{\rho_2+1}}\right)dsdt \right |\\
\le \left |\int_0^1 \int_0^1 (1-s)^{\alpha_1-1}(1-t)^{\alpha_2-1} \left((x+h_1)^{\rho_1+1}\right)^{\alpha_1}\left((y+h_2)^{\rho_2+1}\right)^{\alpha_2}. \left[f\left((x+h_1)s^{\frac{1}{\rho_1+1}},(y+h_2)t^{\frac{1}{\rho_2+1}}\right)-f\left(xs^{\frac{1}{\rho_1+1}},yt^{\frac{1}{\rho_2+1}}\right) \right] dsdt\right |\\
+\int_0^1 \int_0^1 (1-s)^{\alpha_1-1}(1-t)^{\alpha_2-1}f\left(xs^{\frac{1}{\rho_1+1}},yt^{\frac{1}{\rho_2+1}}\right).\\\left[ \left((x+h_1)^{\rho_1+1}\right)^{\alpha_1}\left((y+h_2)^{\rho_2+1}\right)^{\alpha_2}-\left(x^{\rho_1+1}\right)^{\alpha_1}\left(y^{\rho_2+1}\right)^{\alpha_2} \right]dsdt 
\leq \left((x+h_1)^{\rho_1+1}\right)^{\alpha_1}\left((y+h_2)^{\rho_2+1}\right)^{\alpha_2} \left |\int_0^1 \int_0^1 (1-s)^{\alpha_1-1}(1-t)^{\alpha_2-1}.\\ \left[f\left((x+h_1)s^{\frac{1}{\rho_1+1}},(y+h_2)t^{\frac{1}{\rho_2+1}}\right)-f\left(xs^{\frac{1}{\rho_1+1}},yt^{\frac{1}{\rho_2+1}}\right) \right] dsdt \right |\\
+\int_0^1 \int_0^1 (1-s)^{\alpha_1-1}(1-t)^{\alpha_2-1}f\left(xs^{\frac{1}{\rho_1+1}},yt^{\frac{1}{\rho_2+1}}\right).\\\left[ \left((x+h_1)^{\rho_1+1}\right)^{\alpha_1}\left((y+h_2)^{\rho_2+1}\right)^{\alpha_2}-\left(x^{\rho_1+1}\right)^{\alpha_1}\left(y^{\rho_2+1}\right)^{\alpha_2} \right]dsdt .
\end{dmath*}
Let $i\geq 1,j\geq 1.$ On the one hand,
\begin{dmath*}
\left |\int_0^1 \int_0^1 (1-s)^{\alpha_1-1}(1-t)^{\alpha_2-1} \left[f\left((x+h_1)s^{\frac{1}{\rho_1+1}},(y+h_2)t^{\frac{1}{\rho_2+1}}\right)-f\left(xs^{\frac{1}{\rho_1+1}},yt^{\frac{1}{\rho_2+1}}\right) \right] dsdt \right |\\
=\Bigl| \int_0^{\frac{1}{i+1}} \int_0^{\frac{1}{j+1}} (1-s)^{\alpha_1-1}(1-t)^{\alpha_2-1} \left[f\left((x+h_1)s^{\frac{1}{\rho_1+1}},(y+h_2)t^{\frac{1}{\rho_2+1}}\right)-f\left(xs^{\frac{1}{\rho_1+1}},yt^{\frac{1}{\rho_2+1}}\right) \right] dsdt \Bigr|\\
+\sum_{l=1}^j \Bigl| \int_0^{\frac{1}{i+1}} \int_{\frac{l}{j+1}}^{\frac{l+1}{j+1}} (1-s)^{\alpha_1-1}(1-t)^{\alpha_2-1}.\\ \left[f\left((x+h_1)s^{\frac{1}{\rho_1+1}},(y+h_2)t^{\frac{1}{\rho_2+1}}\right)-f\left(xs^{\frac{1}{\rho_1+1}},yt^{\frac{1}{\rho_2+1}}\right) \right] dsdt \Bigr|\\
+\sum_{r=1}^i\Bigl| \int_{\frac{r}{i+1}}^{\frac{r+1}{i+1}} \int_0^{\frac{1}{j+1}} (1-s)^{\alpha_1-1}(1-t)^{\alpha_2-1}.\\ \left[f\left((x+h_1)s^{\frac{1}{\rho_1+1}},(y+h_2)t^{\frac{1}{\rho_2+1}}\right)-f\left(xs^{\frac{1}{\rho_1+1}},yt^{\frac{1}{\rho_2+1}}\right) \right] dsdt \Bigr|\\
+\sum_{r=1}^i \sum_{l=1}^j\Bigl| \int_{\frac{r}{i+1}}^{\frac{r+1}{i+1}} \int_{\frac{l}{j+1}}^{\frac{l+1}{j+1}} (1-s)^{\alpha_1-1}(1-t)^{\alpha_2-1}.\\ \left[f\left((x+h_1)s^{\frac{1}{\rho_1+1}},(y+h_2)t^{\frac{1}{\rho_2+1}}\right)-f\left(xs^{\frac{1}{\rho_1+1}},yt^{\frac{1}{\rho_2+1}}\right) \right] dsdt \Bigr|
\leq \frac{1}{(i+1)(j+1)}R_f\left[[0,\delta]\times [0,\delta]\right]\\
+\sum_{l=1}^j \frac{1}{(i+1)(j+1)}\left( R_f\left[[0,\delta]\times [(l-1)\delta,l\delta]\right]+R_f\left[[0,\delta]\times [l\delta,(l+l)\delta]\right]\right)\\
+\sum_{r=1}^i\frac{1}{(i+1)(j+1)}(R_f\left[[(r-1)\delta,r\delta]\times [0,\delta]\right]+R_f\left[[r\delta,(r+1)\delta]\times [0,\delta]\right])\\
+\sum_{r=1}^i \sum_{l=1}^j\frac{1}{(i+1)(j+1)}(R_f\left[[(r-1)\delta,r\delta]\times [(l-1)\delta,l\delta]\right]+R_f\left[[(r-1)\delta,r\delta]\times [l\delta,(l+1)\delta]\right]\\+R_f\left[[r\delta,(r+1)\delta]\times [(l-1)\delta,l\delta]\right]+R_f\left[[r\delta,(r+1)\delta]\times [l\delta,(l+1)\delta]\right]).
\end{dmath*}
By using Bernoulli's inequality $(1+u)^{r'}\leq 1+r'u$ for $0\leq r' \leq 1$ and $u\geq -1$, we can see that $$\int_0^{\frac{1}{i+1}} \int_0^{\frac{1}{j+1}} (1-s)^{\alpha_1-1}(1-t)^{\alpha_2-1}dsdt \le \frac{1}{(i+1)(j+1)}. $$\\
On the other hand, for a suitable constant $C$, we have
\begin{dmath*}
\int_0^1 \int_0^1 (1-s)^{\alpha_1-1}(1-t)^{\alpha_2-1}f\left(xs^{\frac{1}{\rho_1+1}},yt^{\frac{1}{\rho_2+1}}\right)\left[ \left((x+h_1)^{\rho_1+1}\right)^{\alpha_1}\left((y+h_2)^{\rho_2+1}\right)^{\alpha_2}-\left(x^{\rho_1+1}\right)^{\alpha_1}\left(y^{\rho_2+1}\right)^{\alpha_2} \right]dsdt\\
\leq  \frac{ C \max _{0\leq (x,y) \leq 1} f(x,y)}{\alpha_1\alpha_2}.
\end{dmath*}
From Lemma \ref{lmm}, we have
\begin{dmath*}
N_\delta(Gr(\mathfrak{I}^\alpha f))\leq  2mn+\frac{1}{\delta} \sum_{j=1}^n \sum_{i=1}^m R_{\mathfrak{I}^\alpha f}[A_{ij}]
\leq 2mn+\frac{1}{\delta} \sum_{j=1}^n \sum_{i=1}^m \left(\frac{1}{(i+1)(j+1)}R_f\left[[0,\delta]\times [0,\delta]\right]\\
+\sum_{l=1}^j \frac{1}{(i+1)(j+1)}\left( R_f\left[[0,\delta]\times [(l-1)\delta,l\delta]\right]+R_f\left[[0,\delta]\times [l\delta,(l+l)\delta]\right]\right)\\
+\sum_{r=1}^i\frac{1}{(i+1)(j+1)}(R_f\left[[(r-1)\delta,r\delta]\times [0,\delta]\right]+R_f\left[[r\delta,(r+1)\delta]\times [0,\delta]\right])\\
+\sum_{r=1}^i \sum_{l=1}^j\frac{1}{(i+1)(j+1)}(R_f\left[[(r-1)\delta,r\delta]\times [(l-1)\delta,l\delta]\right]+R_f\left[[(r-1)\delta,r\delta]\times [l\delta,(l+1)\delta]\right]+R_f\left[[r\delta,(r+1)\delta]\times [(l-1)\delta,l\delta]\right]+R_f\left[[r\delta,(r+1)\delta]\times [l\delta,(l+1)\delta]\right]) \right)\\
+\frac{1}{\delta} \sum_{j=1}^n \sum_{i=1}^m \frac{ C \max _{0\leq (x,y) \leq 1} f(x,y)}{\alpha_1\alpha_2}
\leq \frac{1}{\delta}\left(C+ \sum_{j=1}^n \sum_{i=1}^m \left( \frac{1}{(i+1)(j+1)}R_f\left[[0,\delta]\times [0,\delta]\right]\\
+\sum_{l=1}^j \frac{1}{(i+1)(j+1)}\left( R_f\left[[0,\delta]\times [(l-1)\delta,l\delta]\right]+R_f\left[[0,\delta]\times [l\delta,(l+l)\delta]\right]\right)\\
+\sum_{r=1}^i\frac{1}{(i+1)(j+1)}(R_f\left[[(r-1)\delta,r\delta]\times [0,\delta]\right]+R_f\left[[r\delta,(r+1)\delta]\times [0,\delta]\right])\\
+\sum_{r=1}^i \sum_{l=1}^j\frac{1}{(i+1)(j+1)}(R_f\left[[(r-1)\delta,r\delta]\times [(l-1)\delta,l\delta]\right]+
R_f\left[[(r-1)\delta,r\delta]\times\\ [l\delta,(l+1)\delta]\right]+R_f\left[[r\delta,(r+1)\delta]\times [(l-1)\delta,l\delta]\right]+R_f\left[[r\delta,(r+1)\delta]\times [l\delta,(l+1)\delta]\right]) \right)\right)
\leq\frac{C}{\delta} \left( \sum_{j=0}^n \sum_{i=0}^m \frac{1}{(i+1)(j+1)} \right)\left( \sum_{j=1}^n \sum_{i=1}^m R_f[A_{ij}] \right)
\leq \frac{C}{\delta} (\log m) (\log n) \sum_{j=0}^n \sum_{i=0}^m R_f[A_{ij}] 
\leq C (\log m) (\log n) N_\delta(Gr(f)).
\end{dmath*}
Therefore, 
\begin{dmath*}
\frac{\log N_\delta(Gr(\mathfrak{I}^\alpha f))}{-\log \delta}\leq \frac{\log \left \{ C (\log m) (\log n) N_\delta(Gr(f))\right\}}{-\log \delta}
\leq \frac{\log C}{-\log \delta}+\frac{\log(\log m)}{-\log \delta}+\frac{\log(\log n)}{-\log \delta}+\frac{\log N_\delta(Gr(f))}{-\log \delta}.
\end{dmath*}
So, we obtain
\begin{dmath*}
\overline{\dim}_B Gr(\mathfrak{I}^{\alpha}f,I\times J)=\overline{\lim_{\delta \to 0}}\frac{\log N_\delta(Gr(\mathfrak{I}^\alpha f))}{-\log \delta}
\leq \overline{\lim_{\delta \to 0}} \left( \frac{\log C}{-\log \delta}+\frac{\log(\log m)}{-\log \delta}+\frac{\log(\log n)}{-\log \delta}+\frac{\log N_\delta(Gr(f))}{-\log \delta}\right)
\leq \overline{\lim_{\delta \to 0}}\frac{\log N_\delta(Gr(f))}{-\log \delta}
=\lim_{\delta \to 0}\frac{\log N_\delta(Gr(f))}{-\log \delta}=2.
\end{dmath*}
So, Inequality \ref{14} holds. From Inequalities \ref{12} and \ref{14}, we get  \ref{eq12}.
\end{proof}
\begin{corollary}
Let $0<\alpha_1<1,~0<\alpha_2<1$, ~$\rho_1<-1,~\rho_2<-1$ and $f$ is a continuous function of bounded variation on $[0,1]\times[0,1].$ Then $$\dim_BGr(\mathfrak{I}^{\alpha}f)=2.$$
\end{corollary}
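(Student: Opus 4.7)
The plan is to recognise this as an immediate specialisation of Theorem \ref{Thmain}. The hypotheses of the corollary -- continuity plus bounded variation of $f$ -- together with the result cited in the introduction from \cite{V1}, namely that any continuous bivariate function of bounded variation in Arzel\'{a}'s sense on a rectangle has graph of box dimension exactly $2$, yield the standing hypothesis of Theorem \ref{Thmain} that $\dim_B Gr(f, I \times J) = 2$.

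More precisely, the argument proceeds in two short steps. First, invoke the result of \cite{V1} recalled in the introduction to conclude $\dim_B Gr(f, I \times J) = 2$ from continuity and bounded variation. Second, bridge the mild gap that Theorem \ref{Thmain} is stated for non-negative $f$ while the corollary makes no sign assumption: since $f$ is continuous on the compact set $I \times J$ it is bounded, and one may write $f = g - M$, where $g := f + M$ and $M$ is a sufficiently large positive constant chosen so that $g \geq 0$. The graph of $g$ differs from the graph of $f$ by a vertical translation, hence $\dim_B Gr(g) = \dim_B Gr(f) = 2$, and $g$ is still continuous and of bounded variation. By linearity of the Katugampola operator, $\mathfrak{I}^\alpha f = \mathfrak{I}^\alpha g - M\, \mathfrak{I}^\alpha 1$, and $\mathfrak{I}^\alpha 1$ is an explicit continuous function whose graph is a smooth surface of box dimension $2$, so subtracting $M\, \mathfrak{I}^\alpha 1$ from $\mathfrak{I}^\alpha g$ amounts to a bi-Lipschitz distortion of the graph on the compact region $I \times J$ and preserves its box dimension. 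Applying Theorem \ref{Thmain} to $g$ then gives $\dim_B Gr(\mathfrak{I}^\alpha g, I \times J) = 2$, whence $\dim_B Gr(\mathfrak{I}^\alpha f, I \times J) = 2$.

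I expect no serious obstacle: the corollary is a direct consequence of Theorem \ref{Thmain} coupled with the cited dimension theorem for continuous bivariate functions of bounded variation in Arzel\'{a}'s sense. The only mild subtlety is the reduction from arbitrary sign to the non-negative case required by Theorem \ref{Thmain}, which is handled by the linearity and translation argument sketched above; everything else is bookkeeping.
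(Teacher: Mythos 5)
Your argument is essentially the paper's: the paper's proof is exactly the two-step application you describe, quoting the fact that a continuous function of bounded variation in Arzel\'{a}'s sense on $[0,1]\times[0,1]$ has graph of box dimension $2$ (the paper cites Lemma 3.7 of \cite{V2} rather than \cite{V1}, but it is the same statement) and then invoking Theorem \ref{Thmain}. The one place you go beyond the paper is the reduction to the non-negative case: the paper applies Theorem \ref{Thmain} directly and silently ignores that the theorem is stated for non-negative $f$ while the corollary makes no sign assumption, so your translation $g=f+M$ is a legitimate patch of a gap the authors leave open. Two small cautions about your bridge: the claim that subtracting $M\,\mathfrak{I}^{\alpha}1$ is a bi-Lipschitz distortion is too strong, since $\mathfrak{I}^{\alpha}1$ is of the form $C\,x^{(\rho_1+1)\alpha_1}y^{(\rho_2+1)\alpha_2}$ and is in general only H\"older (not Lipschitz) near the coordinate axes; it is cleaner to note that the lower bound $\dim_B\geq 2$ is automatic for a continuous surface, while the upper bound follows from Lemma \ref{lmm} because the oscillation of a sum on each cell is at most the sum of the oscillations, so adding a smooth (or merely H\"older) function cannot push the upper box dimension above $2$. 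Also, with the paper's stated parameter range $\rho_1<-1,\ \rho_2<-1$ the function $\mathfrak{I}^{\alpha}1$ actually degenerates near the axes (this looks like a sign slip in the paper itself), which is another reason to phrase the final step via oscillations rather than bi-Lipschitz invariance.
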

\begin{proof}
From Lemma 3.7 in \cite{V2}, for a function $f$ which is continuous and of bounded variation in Arzel\'{a} sense on $[0,1]\times[0,1]$, we have $$\dim_BGr(f)=2.$$
So, from Theorem \ref{Thmain}, we obtain $$\dim_BGr(\mathfrak{I}^{\alpha}f)=2.$$
This completes the proof.
\end{proof}
\begin{remark}
In \cite{V2}, Verma and Viswanathan proved that the fractional integral of mixed Katugampola type of a bounded variation function is again bounded variation in Arzel\'{a} sense. By using this result they deduce that the box dimension of the fractional integral of mixed Katugampola type is $2$. Their results more on analytical aspects. But, we have proved that if $f$ is continuous function having box dimension $2$, then the box dimension of the fractional integral of mixed Katugampola type of $f$ is also $2.$ So, our results more on dimensional aspects.
\end{remark}
\begin{theorem}\label{thH}
Let a non-negative function $f(x,y)\in C(I\times J)$ and $0<\gamma_1<1,0<\gamma_2<1.$\\ If
 \begin{equation}
\dim_B Gr(f, I\times J)=2,
\end{equation}
then, the box dimension of the mixed Hadamard fractional integral of $f(x,y)$ of order $\gamma=(\gamma_1,\gamma_2)$ exists and is equal to $2$ on $I\times J$, as
\begin{equation}
\dim_B Gr(\mathfrak{I}^{\gamma}f,I\times J)=2.
\end{equation}
\end{theorem}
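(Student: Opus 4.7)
The plan is to transcribe the proof of Theorem \ref{Thmain} to the Hadamard setting, with the logarithmic kernel $(\log(x/u))^{\gamma_1-1}(\log(y/v))^{\gamma_2-1}/(uv)$ playing the role of the Katugampola kernel. The limiting relation $\rho_i\to -1^{+}$ observed in Section~2 motivates this transcription but does not itself deliver the theorem, since box dimension does not behave continuously under pointwise convergence of kernels. First I would record that $\mathfrak{I}^{\gamma}f$ is continuous on $I\times J$ (analogously to the continuity result for $\mathfrak{I}^{\alpha}f$ in \cite{V2}); combined with the general fact that continuous functions on a planar domain have graph of lower box dimension at least $2$, this gives $\underline{\dim}_B Gr(\mathfrak{I}^{\gamma}f,I\times J)\ge 2$, and only the upper bound remains.

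For the upper bound I would use the change of variables $u=a\,(x/a)^{s}$, $v=c\,(y/c)^{t}$, which is the Hadamard analogue of $u=xs^{1/(\rho_1+1)}$: under it $\log(x/u)=(1-s)\log(x/a)$ and $du/u=\log(x/a)\,ds$, so that both iterated integrals in $\mathfrak{I}^{\gamma}f(x+h_1,y+h_2)-\mathfrak{I}^{\gamma}f(x,y)$ become integrals over the fixed square $[0,1]^{2}$ with weight $(1-s)^{\gamma_1-1}(1-t)^{\gamma_2-1}$ times a prefactor $(\log(\cdot/a))^{\gamma_1}(\log(\cdot/c))^{\gamma_2}$. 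Adding and subtracting the intermediate term with the shifted prefactor but unshifted argument of $f$ splits the increment into one piece controlled by local oscillations of $f$ and one piece controlled by the smooth difference of the prefactors. I would then reproduce the core estimate of the Katugampola proof: partition $[0,1]^{2}$ into sub-rectangles of sides $1/(i+1)$ and $1/(j+1)$, bound each sub-integral by $R_{f}$ on a translated cell, and apply the Bernoulli-type inequality $\int_{0}^{1/(i+1)}\!\int_{0}^{1/(j+1)}(1-s)^{\gamma_1-1}(1-t)^{\gamma_2-1}\,ds\,dt\le 1/((i+1)(j+1))$. Summing, invoking Lemma \ref{lmm}, and using the harmonic bound $\sum_{i,j}1/((i+1)(j+1))\le C(\log m)(\log n)$ delivers $N_{\delta}(Gr(\mathfrak{I}^{\gamma}f))\le C(\log m)(\log n)\,N_{\delta}(Gr(f))$, and the limit $\delta\to 0$ closes the argument exactly as in Theorem \ref{Thmain}.

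The principal obstacle is the behaviour of the logarithmic prefactor $(\log(x/a))^{\gamma_1}$ near $a=0$: unlike the polynomial factor $x^{\rho_1+1}$ of the Katugampola case, this factor blows up as $x\to a^{+}$, and for $a=0$ the Hadamard integral may fail to be finite for merely continuous $f$. Under the assumption $a,c>0$ in Definition \ref{Def2}, the prefactors are uniformly bounded on the working rectangle and the outline above goes through verbatim; if one insists on $a=c=0$ then an additional argument is required near the coordinate axes, in the spirit of the separate treatment of the $i=0$ and $j=0$ rows in the Katugampola sum, but now requiring a careful regularisation of the logarithmic singularity before the oscillation estimate can be applied.
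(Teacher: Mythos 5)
Your proposal is correct and is essentially the paper's approach: the paper offers no separate argument for this theorem, stating only that the proof ``follows as Theorem \ref{Thmain}'', and your transcription via the substitution $u=a(x/a)^{s}$, $v=c(y/c)^{t}$ (turning the Hadamard kernel into the fixed-square weight $(1-s)^{\gamma_1-1}(1-t)^{\gamma_2-1}$ with a logarithmic prefactor) is exactly the intended adaptation of the Katugampola argument. Your caveat about the behaviour at $a=c=0$ is well taken and in fact more careful than the source: the paper works on $[0,1]\times[0,1]$ without addressing the divergence of the Hadamard integral at the axes (its own subsequent remark quietly uses $\log(y/c)$ with $c>0$), so requiring $a,c>0$ or a separate regularisation near the axes is a genuine point the paper leaves unresolved.
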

The proof of the Theorem \ref{thH} follows as Theorem \ref{Thmain}.
\begin{lemma}
\cite{V1} \label{lmV2} Suppose a continuous function $h:[c,d] \to \mathbb{R}$. Define a set $H=\{(x,y,h(y)):x\in [a,b], y \in[c,d]\}$ and $a<b.$ Then, $\overline{\dim}_B(H) \leq \overline{\dim}_B(Gr(h))+1.$
\end{lemma}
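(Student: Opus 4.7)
The plan is to recognize $H$ as (up to a harmless permutation of coordinates) the Cartesian product $[a,b]\times Gr(h)\subset\mathbb{R}^3$, and then to invoke the classical product estimate for upper box dimension. Indeed, for each fixed $y\in[c,d]$ the slice $\{(x,y,h(y)):x\in[a,b]\}$ is a horizontal segment of length $b-a$ sitting over the graph point $(y,h(y))$, so $H$ is the union of such segments parametrised by $Gr(h)$, i.e.\ $H=[a,b]\times Gr(h)$ after swapping the first two coordinates.

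Next I would fix $0<\delta<1$ and build a product cover: cover $[a,b]$ by at most $\lceil(b-a)/\delta\rceil$ intervals of length $\delta$, and cover the compact set $Gr(h)\subset\mathbb{R}^2$ by $N_\delta(Gr(h))$ sets of diameter at most $\delta$. Pairwise Cartesian products then cover $H$ by at most $C_{a,b}\,\delta^{-1}\,N_\delta(Gr(h))$ sets of diameter at most $\sqrt 2\,\delta$, giving the key covering estimate
\[
N_{\sqrt 2\,\delta}(H) \le C_{a,b}\,\delta^{-1}\,N_\delta(Gr(h)).
\]

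Finally, taking logarithms, dividing by $-\log\delta$, passing to $\limsup_{\delta\to 0^+}$, and using that $\log(\sqrt 2\,\delta)/\log\delta\to 1$, I obtain
\[
\overline{\dim}_B(H) \le \limsup_{\delta\to 0^+}\frac{\log C_{a,b} + \log(1/\delta) + \log N_\delta(Gr(h))}{-\log\delta} = 1 + \overline{\dim}_B(Gr(h)),
\]
as required. There is essentially no obstacle here: this is the standard \emph{box dimension of a product is at most the sum of box dimensions}, specialised to the case where one factor is an interval of box dimension exactly $1$. The only mildly technical point to watch is the $\sqrt 2$ enlargement coming from Euclidean diameters of products of small sets, which is absorbed harmlessly in the limit, and the compactness of $Gr(h)$ (which follows from continuity of $h$) to ensure that finite covers of the stated form exist.
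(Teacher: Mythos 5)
Your argument is correct: identifying $H$ with the product $[a,b]\times Gr(h)$ and running the standard product-cover estimate (with the harmless $\sqrt{2}$ dilation absorbed in the limit) gives exactly $\overline{\dim}_B(H)\le \overline{\dim}_B(Gr(h))+1$. The paper itself does not prove this lemma but cites it from \cite{V1}, where the proof is essentially this same covering argument---each $\delta$-cover element of $Gr(h)$ is lifted to roughly $(b-a)/\delta$ cubes over the $x$-direction---so your proposal matches the intended proof.
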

\begin{remark}\label{rmk1}
Let $g:[a,b] \to \mathbb{R}$ and $h:[c,d] \to \mathbb{R}$ are two continuous maps. Now, define $g_1,g_2:[a,b]\times [c,d]\to \mathbb{R}$ such that $$g_1(x,y)=g(x)+h(y), ~~\text{and}~~~g_2(x,y)=g(x)h(y).$$
By using Lemma \ref{lmV2}, we have $\overline{\dim}_BGr(g_1)\le \overline{\dim}_BGr(h)+1$ and $\overline{\dim}_BGr(g_2)\le \overline{\dim}_BGr(h)+1.$
\end{remark}
In the following remark, we corroborate the Theorem \ref{thH} and try to establish relation with univariate case.
\begin{remark}
Let $h:[a,b]\to \mathbb{R}$ be a continuous function having box dimension $1$. We define a bivariate continuous function $f:[a,b]\times [c,d] \to \mathbb{R}$ such that $f(x,y)=h(x).$  From \ref{Def2}, we have $$ (\mathfrak{I}^{\gamma}f)(x,y)=\frac{1}{\Gamma (\gamma_1)  \Gamma (\gamma_2)} \int_a ^x \int_c ^y (\log\frac{x}{u})^{\gamma_1-1} (\log\frac{y}{v})^{\gamma_2-1}\frac{f(u,v)}{uv} dudv.$$
For $\gamma_2=1,$ we get
$$ (\mathfrak{I}^{\gamma}f)(x,y)=\frac{1}{\Gamma (\gamma_1)} \int_a ^x \int_c ^y (\log\frac{x}{u})^{\gamma_1-1}\frac{f(u,v)}{uv} dudv.$$
From the definition of $f$, we obtain
$$ (\mathfrak{I}^{\gamma}f)(x,y)=\frac{\log(\frac{y}{c})}{\Gamma (\gamma_1)} \int_a ^x(\log\frac{x}{u})^{\gamma_1-1}\frac{h(u)}{u} du.$$
Now, we have the following relation between the  fractional integral of Hadamard type and  mixed Hadamard type  $$ (\mathfrak{I}^{\gamma}f)(x,y)=\log(\frac{y}{c})(\mathfrak{I}^{\gamma_1}h)(x),$$
where the Hadamard fractional integral is given by $$(\mathfrak{I}^{\gamma_1}h)(x)=\frac{1}{\Gamma (\gamma_1)} \int_a ^x(\log\frac{x}{u})^{\gamma_1-1}\frac{h(u)}{u}du.$$
From Remark \ref{rmk1}, we have $\overline{\dim}_BGr(\mathfrak{I}^{\gamma}f)\le \overline{\dim}_BGr(\mathfrak{I}^{\gamma_1}h)+1.$ Since, $\dim_BGr(h)=1,$ from \cite{JY} it follows that $\dim_BGr(\mathfrak{I}^{\gamma_1} h)=1$, and hence $\dim_BGr(\mathfrak{I}^\gamma f)=2.$ This corroborates Theorem \ref{thH}.
\end{remark}
\subsection*{Acknowledgements} The First author thanks to CSIR, India for the grant with file number 09/1058(0012)/2018-EMR-I.
\bibliographystyle{amsplain}

\end{document}